\title{Curious Pivot Points of Least-Squares Regression}
\author{Sable Levy}
\date{\today}
\numberwithin{equation}{section}
\newtheorem{corollary}{Corollary}
\newtheorem{proposition}{Proposition}
\newtheorem{conj}{Conjecture}
\newtheorem{definition}{Definition}
\begin{document}
\newpage
\maketitle

\begin{abstract}
    It has been shown that for a given set of points in a plane, the least-squares regression line pivots about a fixed point when any single point in the set is repeated.  We consider what happens when more than one point is repeated. Geometrically, we describe the regions where pivot points may lie under all possible combinations of repetitions. The underlying framework of this pivoting is explored, yielding new open-ended questions. \end{abstract}



\section{Introduction}
Seldom does the underlying geometry of least-squares regression provoke great intrigue among us. But what if there is more than meets the eye? There is, after all, something seemingly curious about the pivoting of regression lines under the condition of datum repetition, as shown by Carl Lutzer~\cite{Lutzer}. Since this little-known phenomenon has yet to be explored in depth, let us proceed to investigate. 
\subsection{The regression line pivots}
The mathematical basis for pivoting is the method of least squares, which gives the line that minimizes $\mathcal{E}$, the sum of squared differences between the estimated and actual $y$ values for given $x$ values. In order to determine this line for a set $S= \left\lbrace{S_1,\ldots,S_n} \mid S_i = (x_{i}, y_{i})\right\rbrace$ of points in a plane,
we can construct a system of equations from their coordinates in the form $A\textbf{p}=\textbf{y}$: \[
\left[ \begin{array}{ccccc}
		x_{1} & 1\\
        x_{2} & 1\\
\vdots & \vdots\\
x_{n} & 1\\

		\end{array} \right] \left [ \begin{array}{c} 
								m\\
								b\\
															\end{array} \right ] = \left [ \begin{array}{c} 
							 	y_{1}\\
                                y_{2}\\
								\vdots\\
                                y_{n}\\
								
							\end{array} \right ],
\]
where $m$ and $b$ are the slope and $y$-intercept of the line, respectively.

                 Following the approach of \cite{Lutzer}, by repeating $S_1$ $k$ times the system of equations becomes
\begin{equation}\label{k+1}
\left[\begin{array}{ccccc}x_1 & 1\\
        \vdots & \vdots\\
        x_1 & 1\\
        x_{2} & 1\\
        x_{3} & 1\\
\vdots & \vdots\\
x_{n} & 1\\ \end{array}\right]
\left[\begin{array}{c}m \\ b \end{array}\right]=
\left[\begin{array}{c}y_1\\
                                \vdots\\
                                y_1\\
                                y_{2}\\
                                y_{3}\\
								\vdots\\
                                y_{n}\\
		\end{array}\right]
\begin{array}{l}
\left.\vphantom{\begin{matrix} 0 \\ \vdots\\ 0
\end{matrix}}\right\}k+1\text{ rows} \\ \left.
\vphantom{\begin{matrix} b_0\\[0.45cm]  \ddots\\ b_0\ \end{matrix}} \right.\end{array}
\end{equation}
We can find \textbf{p} by left-multiplying by the transpose of the coefficient matrix, which produces the normal equation $A^{T}A$\textbf{p}=$A^{T}$\textbf{y}:
\[
\left[ \begin{array}{ccccc}
		kx_{1}^2 + \sum\limits_{j=1}^n x_{j}^2 & kx_{1} + \sum\limits_{j=1}^n x_{j}\\
        \\\
        kx_{1} + \sum\limits_{j=1}^n x_{j} & n+k\\
      
		\end{array} \right] \left [ \begin{array}{c} 
								m\\
								b\\
															\end{array} \right ] = \left [ \begin{array}{c} 
							 	kx_{1}y_{1} + \sum\limits_{j=1}^n x_{j}y_{j}\\
                                \\
                                ky_{1} + \sum\limits_{j=1}^n y_{j}\\
								
							\end{array} \right ].
\]
Solving this provides the values of \textit{m} and \textit{b} that minimize $ \mathcal{E}$, giving us the least-squares solution.

For simplicity, we translate the points in $S$ so that the repeated point $S_1$ is at the origin.  This produces $S_1$-centric coordinates, for which every $S_j \in S$ is expressed in terms of its displacement from $S_1$. In performing this translation nothing is lost geometrically because the slope of the regression line is determined by the position of the points relative to each other. In general, a point $(x_j$, $y_j)$ has $S_i$-centric coordinates $(\upchi_j$, $\upgamma_j)$ where $\upchi_j=x_j-x_i$ and $\upgamma_j= y_j-y_i$.  The $S_1$-centric analog of \eqref{k+1} 
has the normal equation 
\[
\left[ \begin{array}{ccccc}
		\sum\limits_{j=1}^n \upchi_{j}^2 & \sum\limits_{j=1}^n \upchi_{j}\\
        \\\
\sum\limits_{j=1}^n \upchi_{j} & n+k\\
      
		\end{array} \right] \left [ \begin{array}{c} 
								m\\
								b\\
															\end{array} \right ] = \left [ \begin{array}{c} 
							 	 \sum\limits_{j=1}^n \upchi_{j}\upgamma_{j}\\
                                \\
                                 \sum\limits_{j=1}^n \upgamma_{j}\\
								
							\end{array} \right ].
\]

Comparing the first components of the left-hand and right-hand sides gives
$$m\left(\sum\limits_{j=1}^n \upchi_{j}^2\right) + b\left(\sum\limits_{j=1}^n \upchi_{j}\right)= \sum\limits_{j=1}^n \upchi_{j}\upgamma_{j},$$
which we can divide by $\sum\limits_{j=1}^n \upchi_{j}$ to derive $m\tilde{\upchi}+b=\tilde{\upgamma}$ where
$$
\tilde{\upchi}=\frac{\sum\limits_{j=1}^n{\upchi_j^2}}{\sum\limits_{j=1}^n{\upchi_j}}, \indent \tilde{\upgamma}=\frac{\sum\limits_{j=1}^n{\upchi_j\upgamma_j}}{\sum\limits_{j=1}^n{\upchi_j}}.
$$

Since $k$ is absent from these equations, $(\tilde{\upchi}$, $\tilde{\upgamma})$ is on the regression line regardless of how many times $S_1$ is repeated. Intuitively, when a point is repeated, the upgraded regression line moves toward it, if only slightly. 
In order for the regression line to move toward $S_1$ and yet always pass through $(\tilde{\upchi}$, $\tilde{\upgamma})$, it must pivot on this point. And so we have a pivot point.

\section{Pivot point remarks}
\begin{definition}\label{P-centricPivot} Let $\uprho_{i} = (\tilde{\upchi}_i, \tilde{\upgamma}_i)$ be the $S_i$-centric pivot point corresponding to any $S_i \in S$, where
$$\tilde{\upchi}_i = \frac{\sum\limits_{j=1}^n{\upchi_j^2}}{\sum\limits_{j=1}^n{\upchi_j}}, \indent \tilde{\upgamma}_i=\frac{\sum\limits_{j=1}^n{\upchi_j\upgamma_j}}{\sum\limits_{j=1}^n{\upchi_j}}.$$

\end{definition}

\begin{definition}\label{Def1} Let $P_{i} = (\tilde{x}_i, \tilde{y}_i)$ be the pivot point of $S_i$ expressed in terms of the usual Cartesian system. 
$$\tilde{x}_i=x_{i}+\frac{\sum\limits_{j=1}^n{(x_j-x_i)^2}}{\sum\limits_{j=1}^n{(x_j-x_i)}}, \indent  \tilde{y}_i=y_{i}+\frac{\sum\limits_{j=1}^n{(x_j-x_i)(y_j-y_i)}}{\sum\limits_{j=1}^n({x_j-x_i)}}.$$
\end{definition}
This concludes our summary of the results of \cite{Lutzer}. Observe that if $\sum\limits_{j=1}^n \upchi_{j}=0$, then the pivot point is undefined. In such a case, we may consider the regression line to be pivoting at $\infty$. With this exception, a pivot corresponding to each point in $S$ lies on every regression line. If some $S_i$ is not repeated, $P_i$ represents where the regression line \textit{would} pivot.

\begin{definition}\label{reps}
Let $k_r \in \mathbb{N}$ be the number of repetitions of any $S_r \in S$.
\end{definition}

\begin{proposition}\label{ConvergetoR}
   The pivot points corresponding to each of the non-repeated points in $S$ converge to $S_r$ as $k_r \to \infty$.
\end{proposition}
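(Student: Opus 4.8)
The plan is to recompute the pivot point of an arbitrary non-repeated point $S_i$ \emph{in the enlarged data set} in which $S_r$ carries multiplicity $k_r+1$, and then let $k_r\to\infty$. Fix $i\neq r$ and pass to $S_i$-centric coordinates $(\upchi_j,\upgamma_j)=(x_j-x_i,\,y_j-y_i)$, so that $S_i$ sits at the origin and $S_r$ sits at $(\upchi_r,\upgamma_r)$. Repeating $S_r$ appends $k_r$ extra copies of the row $(\upchi_r\ \ 1)$ to the coefficient matrix and $k_r$ extra copies of $\upgamma_r$ to the right-hand side, so when we form $A^{T}A\mathbf p=A^{T}\mathbf y$ the top row of the normal equation becomes
\[
m\left(\sum_{j=1}^{n}\upchi_j^{2}+k_r\upchi_r^{2}\right)+b\left(\sum_{j=1}^{n}\upchi_j+k_r\upchi_r\right)=\sum_{j=1}^{n}\upchi_j\upgamma_j+k_r\upchi_r\upgamma_r .
\]
Dividing by the coefficient of $b$, exactly as in the Introduction (legitimate for all large $k_r$ provided $\upchi_r\neq 0$, since then $\sum_j\upchi_j+k_r\upchi_r$ is eventually nonzero), shows that the point
\[
\uprho_i(k_r)=\left(\frac{\sum_{j=1}^{n}\upchi_j^{2}+k_r\upchi_r^{2}}{\sum_{j=1}^{n}\upchi_j+k_r\upchi_r}\;,\;\frac{\sum_{j=1}^{n}\upchi_j\upgamma_j+k_r\upchi_r\upgamma_r}{\sum_{j=1}^{n}\upchi_j+k_r\upchi_r}\right)
\]
lies on the regression line regardless of how many further times $S_i$ itself is repeated; this is the $S_i$-centric pivot point of $S_i$ in the enlarged set.

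The remaining step is a limit. Dividing the numerator and denominator of each coordinate of $\uprho_i(k_r)$ by $k_r$ and sending $k_r\to\infty$, every term not carrying a factor of $k_r$ vanishes, leaving $\uprho_i(k_r)\to(\upchi_r^{2}/\upchi_r,\ \upchi_r\upgamma_r/\upchi_r)=(\upchi_r,\upgamma_r)$. In $S_i$-centric coordinates $(\upchi_r,\upgamma_r)$ is precisely the point $S_r$, so translating back to the usual Cartesian system, the pivot point $P_i$ of every non-repeated $S_i$ converges to $S_r$. Since $i$ was an arbitrary index different from $r$, the proposition follows.

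The one genuine subtlety — and the step I expect to need the most care — is the degenerate case $\upchi_r=0$, i.e.\ when $S_i$ and $S_r$ share an $x$-coordinate. There the $k_r$-terms cancel out of both numerator and denominator, $\uprho_i(k_r)$ is independent of $k_r$, and the stated convergence can only be read through the ``pivoting at $\infty$'' convention noted earlier (or else must be excluded by a general-position hypothesis $x_i\neq x_r$ for all $i\neq r$); I would flag this assumption explicitly at the start of the proof. Everything else is the bounded-ratio estimate above, which is routine.
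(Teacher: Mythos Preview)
Your argument is correct and follows essentially the same path as the paper: write the $S_i$-centric pivot coordinates of $S_i$ with the $k_r$ extra copies of $S_r$ included, then take the limit as $k_r\to\infty$ to obtain $(\upchi_r,\upgamma_r)$. You supply somewhat more detail than the paper does---re-deriving the formula from the normal equation and flagging the degenerate case $\upchi_r=0$ that the paper passes over in silence---but the method is the same.
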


\begin{proof}

When $S_r$ is repeated $k_r$ times, the $S_i$-centric pivot point $\uprho_i$ of any $S_i \in S,$  $i \neq r$ is given by $$\tilde{\upchi}_i=\frac{k_r\upchi_r^2+\sum\limits_{j=1}^n{\upchi_j^2}}{k_r\upchi_r+\sum\limits_{j=1}^n{\upchi_j}}, \indent \tilde{\upgamma}_i=\frac{k_r\upchi_r\upgamma_r+\sum\limits_{j=1}^n{\upchi_j\upgamma_j}}{k_r\upchi_r+\sum\limits_{j=1}^n{\upchi_j}}.$$
It follows that
$$\lim_{k_r\to\infty}\tilde{\upchi}_i =\lim_{k_r\to\infty}\frac{k_r\upchi_r^2+\sum\limits_{j=1}^n{\upchi_j^2}}{k_r\upchi_r+\sum\limits_{j=1}^n{\upchi_j}} = \upchi_r , \indent \lim_{k_r\to\infty} \tilde{\upgamma}_i= \lim_{k_r\to\infty}\frac{k_r\upchi_r\upgamma_r+\sum\limits_{j=1}^n{\upchi_j\upgamma_j}}{k_r\upchi_r+\sum\limits_{j=1}^n{\upchi_j}} = {\upgamma}_r$$ 
\end{proof}
Figure 1 illustrates what happens when repeating the black point. Pivot points are given by the smaller point of matching color; those corresponding to each of the non-repeated points move linearly in the direction of the black point, though not necessarily monotonically.

\begin{figure}[h!]
\centering
\includegraphics[width=4.7in]{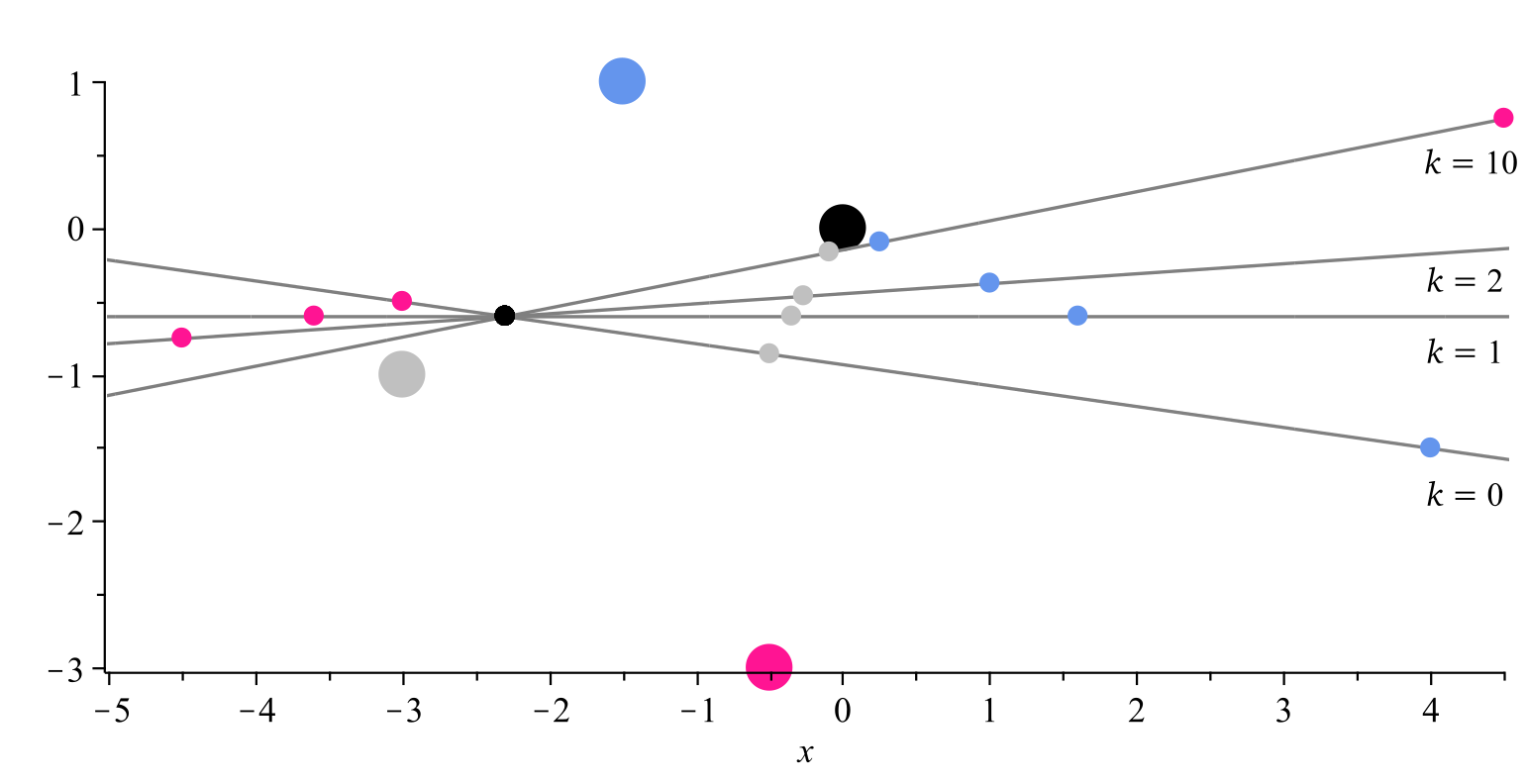}
  
  \caption{Repeating the black point \textit{k} times. Each pivot point represents where the regression line {would} pivot if its corresponding original point were repeated.}
  \label{fig:r4}
\end{figure}

To better understand the relationship between a point and its pivot, consider three points,
\begin{equation}\label{ABC}
\left[ \begin{array}{c}
		S_1\\
        S_2\\
        S_3\\
		\end{array} \right ]    =   
        \left[ \begin{array}{ccccc}
		x_1 & y_1\\
        x_2 & y_2\\
        x_3 & y_3\\
		\end{array} \right] =         \left[ \begin{array}{ccccc}
		0 & 0\\
        \upchi_2 & {\upgamma}_2\\
        \upchi_3 & {\upgamma}_3\\
		\end{array} \right] , \indent x_{1} < x_{2} < x_{3}, \end{equation}
For simplicity, we define the points to be in ascending order on the $x$-axis; no assumptions will be made about the $y$-coordinates. We can perform the transformation 
$$T(\upchi_i)=\frac{\upchi_i-\frac{(\upchi_2+\upchi_3)}{2}}{\frac{\upchi_3-\upchi_2}{2}}$$
which gives

\[T\left(
\left [ \begin{array}{c}
\upchi_1 \\ 
\upchi_2\\ 
\upchi_3\\
\end{array} \right ]\right)  = \left [ \begin{array}{c}
\frac{\upchi_2+\upchi_3}{\upchi_2-\upchi_3} \\ 
-1\\
1\\
\end{array} \right ]. \]
We can see how $\frac{\upchi_2 + \upchi_3}{\upchi_2 - \upchi_3}$ compares to $\tilde{\upchi}_1$, the $x$-coordinate of $S_1$'s pivot point, when performing the same transformation

$$\tilde{\upchi}_1=\frac{\upchi_2^2+\upchi_3^2}{\upchi_2+\upchi_3}, \indent  \indent T(\tilde{\upchi}_1) = \frac{\frac{\upchi_2^2+\upchi_3^2}{\upchi_2+\upchi_3}  - \frac{\upchi_2 + \upchi_3}{2}}{\frac{\upchi_3-\upchi_2}{2}} = \frac{\upchi_3-\upchi_2}{\upchi_2+\upchi_3},$$
which is the negative reciprocal of $S_1$'s transformed $x$-coordinate! We can thus describe the relationship between $x_i$ and $\tilde{x}_i$.  In general, $\tilde{x}_i$ is a {weighted negative inversion} of $x_i$.
Furthermore, $\tilde{y}_i$ is determined by where $\tilde{x}_i$ falls on the regression line.


\section{Barycentric coordinates}
Let us say a few words about a coordinate system that will help us consider the effects of repeating multiple points in $S$.
Barycentric coordinates were introduced by Möbius in his 1827 publication, \textit{Der Barycentrische Calcul}. By attaching weights $\lambda_1,\lambda_2,\lambda_3$ to the respective vertices of a triangle $\triangle S_1S_2S_3$, we can express the position of a point in a plane as a linear combination of those vertices~\cite{Ungar}. We include the condition that the $\lambda_i$ are scaled so that $\lambda_1+\lambda_2+\lambda_3 =1$. 

Without loss of generality, $\lambda_1=0$ along the edge through $S_2$ and $S_3$ because any point along this edge is a linear combination of $S_2$ and $S_3$. Furthermore, $\lambda_i$ is consistent along lines parallel to the edge opposite $S_i$. For our purposes, this is valuable because it provides a computationally efficient way of determining whether a point is located within a triangle.  Specifically, a point $(x,y)$ lies inside (or on an edge of) $\triangle S_1S_2S_3$ if it can be expressed as $(\lambda_1,\lambda_2,\lambda_3)$ where ${x}=\lambda_1x_1+\lambda_2x_2+\lambda_3x_3$ and
${y}=\lambda_1y_1+\lambda_2y_2+\lambda_3y_3$ such that $\lambda_1+\lambda_2+\lambda_3 =1$, $\lambda_i \in [0,1].$
If some $\lambda_i$ $\notin$ [0,1] then the point lies outside the triangle. Furthermore, the signs of the barycentric coordinates of a given point indicate precisely which region of the plane that point is in ~\cite{Nikkhoo}. See Figure \ref{fig:barysigns} for an example.

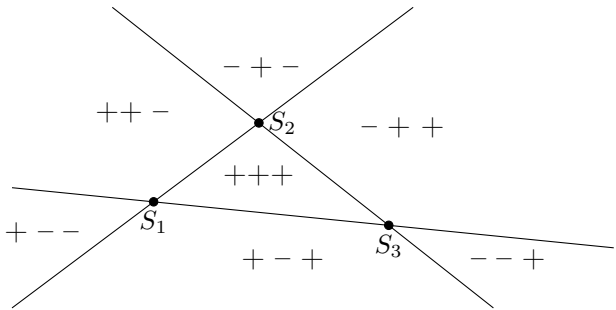
\begin{figure}[h!]
\centering
\begin{tikzpicture}[scale=.8]
    
    \draw (0,0) coordinate (a_1) -- (8/1.2,6/1.2) coordinate (a_2);
    \draw (2/1.2,6/1.2) coordinate (b_1) -- (8,0) coordinate (b_2);
     \draw (0,2) coordinate (c_1) -- (10,1) coordinate (c_2);
    \coordinate (d) at (intersection of a_1--a_2 and b_1--b_2);
    \node [right] at (d) {$S_2$};
    \coordinate (e) at (intersection of a_1--a_2 and c_1--c_2);
        \node [below] at (e) {$S_1$};
    \coordinate (f) at (intersection of c_1--c_2 and b_1--b_2);
        \node [below] at (f) {$S_3$};
    \node at (.5,1.25) {\textbf{+ {--} {--}}};
     \node at (4.1,2.2) {\textbf{+++}};
     \node at (2,3.25) {\textbf{++ --}};
     \node at (4.15,4.) {\textbf{-- + --}};
     \node at (4.5,.8) {\textbf{+ -- +}};
     \node at (8.25,.8) {\textbf{-- -- +}};
     \node at (6.5,3) {\textbf{-- + +}};
     \draw[fill] (d) circle [radius=0.07];
    \draw[fill] (e) circle [radius=0.07];
    \draw[fill] (f) circle [radius=0.07];
\end{tikzpicture}
  \caption{The signs of $\lambda_1, \lambda_2, \lambda_3$ specify which region of the plane a point is in.}
  \label{fig:barysigns}
\end{figure}

\section{Repeating multiple points}
Let us broaden the definition of $\uprho_i$ by incorporating $k_j$ repetitions of each $S_j \in S$.

\begin{definition}Given $k_j$ repetitions of each $S_j \in S$ with $k_j + 1 = \delta_j$, let $\uprho_i = (\tilde{\upchi}_i, \tilde{\upgamma}_i)$ where
$$\tilde{\upchi}_i = \frac{\sum\limits_{j=1}^n{\delta_j\upchi_j^2}}{\sum\limits_{j=1}^n{\delta_j\upchi_j}}, \indent \tilde{\upgamma}_i=\frac{\sum\limits_{j=1}^n{\delta_j\upchi_j\upgamma_j}}{\sum\limits_{j=1}^n{\delta_j\upchi_j}}.$$
\end{definition}

To begin, suppose $S$ contains only four points. 

\[
\left [ \begin{array}{c}
		S_1\\
        S_2\\
        S_3\\
        S_4\\
		\end{array} \right ]
     =   
        \left[ \begin{array}{ccccc}
        x_{1} & y_1\\
		x_{2} & y_2\\
        x_{3} & y_3\\
        x_4 & y_4\\

		\end{array} \right]
        = \left[ \begin{array}{ccccc}
        \upchi_1 & \upgamma_1\\
		\upchi_2 & \upgamma_2\\
        \upchi_{3} & \upgamma_3\\
        0 & 0\\

		\end{array} \right], \indent x_{1} \leq x_{2} \leq x_{3} \leq x_4.\] 

\begin{proposition}With four points, the pivots points corresponding to an outermost point on the $x$-axis are bounded by the triangle whose vertices are the three other points.

\end{proposition}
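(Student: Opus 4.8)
The plan is to write down explicit barycentric coordinates of the pivot of an outermost point with respect to the triangle formed by the other three, and then to check that they are nonnegative; the membership criterion of Section~4 then closes the argument. Since the reflection $x\mapsto-x$ carries points, repetitions, pivots and triangles equivariantly to the reflected configuration, it costs nothing to take the outermost point to be the rightmost one, $S_4$, kept at the origin as in the setup. The ordering $x_1\le x_2\le x_3\le x_4$ then forces $\upchi_j=x_j-x_4\le 0$ for $j=1,2,3$, while $\upchi_4=\upgamma_4=0$, so the $j=4$ summands drop out of the multi-repetition formula and
$$\tilde{\upchi}_4=\frac{\delta_1\upchi_1^{2}+\delta_2\upchi_2^{2}+\delta_3\upchi_3^{2}}{\delta_1\upchi_1+\delta_2\upchi_2+\delta_3\upchi_3},\qquad \tilde{\upgamma}_4=\frac{\delta_1\upchi_1\upgamma_1+\delta_2\upchi_2\upgamma_2+\delta_3\upchi_3\upgamma_3}{\delta_1\upchi_1+\delta_2\upchi_2+\delta_3\upchi_3}.$$

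Writing $D=\delta_1\upchi_1+\delta_2\upchi_2+\delta_3\upchi_3$ for the common denominator, I would propose the weights $\lambda_j=\delta_j\upchi_j/D$ for $j=1,2,3$. A direct computation gives $\lambda_1+\lambda_2+\lambda_3=1$, $\sum_{j=1}^{3}\lambda_j\upchi_j=\tilde{\upchi}_4$ and $\sum_{j=1}^{3}\lambda_j\upgamma_j=\tilde{\upgamma}_4$, so these are exactly the barycentric coordinates of $\uprho_4$ relative to $\triangle S_1S_2S_3$. It then remains to confirm $\lambda_j\in[0,1]$. Here every $\delta_j=k_j+1$ is a positive integer and every $\upchi_j$ is nonpositive, so each product $\delta_j\upchi_j\le 0$ and hence $D\le 0$; thus each $\lambda_j$ is a quotient of two nonpositive quantities and is therefore $\ge 0$, and since the three weights sum to $1$, each is also at most $1$. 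By the barycentric test, $\uprho_4$ lies in (or on the boundary of) $\triangle S_1S_2S_3$, and the rightmost-point case transfers to the leftmost point $S_1$ through the reflection noted above.

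The one place needing care is the degenerate denominator $D=0$. Since each $\delta_j>0$ and each $\upchi_j\le 0$, $D=0$ forces $\upchi_1=\upchi_2=\upchi_3=0$, i.e. all four points share an $x$-coordinate; then $\sum_j\delta_j\upchi_j=0$ and $\uprho_4$ is undefined, the \emph{pivot at $\infty$} of Section~2, so there is nothing to prove. If only some of the $\upchi_j$ vanish, $\triangle S_1S_2S_3$ collapses to a segment and the same computation places $\uprho_4$ on it. I expect the sign bookkeeping to be the real content: the proof goes through precisely because $S_4$ is extreme on the $x$-axis, which is what makes all the $\upchi_j$ share a sign; for an interior point the $\upchi_j$ would have mixed signs, some $\lambda_j$ could turn negative, and the pivot could leave the triangle — so this is exactly where the hypothesis is used.
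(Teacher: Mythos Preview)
Your proof is correct and follows essentially the same route as the paper: both compute the barycentric coordinates of $\uprho_4$ with respect to $\triangle S_1S_2S_3$, obtain $\lambda_j=\delta_j\upchi_j/D$, and use the extremality of $S_4$ to force all $\upchi_j$ to share a sign and hence all $\lambda_j\ge 0$. The only cosmetic differences are that the paper handles the leftmost point by re-translating to $S_1$-centric coordinates rather than invoking a reflection, and that you are more careful than the paper in explicitly verifying the affine identities and treating the degenerate case $D=0$.
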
 
\begin{proof}
We have $\uprho_4$ = $(\tilde{\upchi}_4$, $\tilde{\upgamma}_4)$ where
$$
    \tilde{\upchi}_{4}=\frac{\delta_1\upchi_1^2+\delta_2\upchi_2^2+\delta_3\upchi_3^2}{\delta_1\upchi_1+\delta_2\upchi_2+\delta_3\upchi_3}, \indent \tilde{\upgamma}_{4}=\frac{\delta_1\upchi_1\upgamma_1+\delta_2\upchi_2\upgamma_2+\delta_3\upchi_3\upgamma_3}{\delta_1\upchi_1+\delta_2\upchi_2+\delta_3\upchi_3}.$$
 Expressing $\uprho_4$ using barycentric coordinates,
 
\[
\left[ \begin{array}{ccccc}
		\upchi_{1} & \upchi_{2} & \upchi_3\\
        \upgamma_{1} & \upgamma_{2} & \upgamma_3\\

1 &   1 & 1\\

		\end{array} \right] \left [ \begin{array}{ccccc}
		\lambda_1 \\
        \lambda_2\\
        \lambda_3\\
		\end{array} \right ] = \left [ \begin{array}{ccccc}
		\tilde{\upchi}_4 \\
        \tilde{\upgamma}_4\\
        1\\
		\end{array} \right ] \]  
\[ \left [ \begin{array}{ccccc}
		\lambda_1 \\
        \lambda_2\\
        \lambda_3\\
		\end{array} \right ] = \left[ \begin{array}{ccccc}
		\upchi_{1} & \upchi_{2} & \upchi_3\\
        \upgamma_{1} & \upgamma_{2} & \upgamma_3\\

1 &   1 & 1\\

		\end{array} \right]^{-1} \left [ \begin{array}{ccccc}
	\frac{\delta_1\upchi_1^2+\delta_2\upchi_2^2+\delta_3\upchi_3^2}{\delta_1\upchi_1+\delta_2\upchi_2+\delta_3\upchi_3}  \\[0.1in]
        \frac{\delta_1\upchi_1y_1+\delta_2\upchi_2y_2+\delta_3\upchi_3y_3}{\delta_1\upchi_1+\delta_2\upchi_2+\delta_3\upchi_3}\\[0.1in]
        1\\
		\end{array} \right ]
\]  

\[
 \left [ \begin{array}{ccccc}
		\lambda_1 \\
        \lambda_2\\
        \lambda_3\\
		\end{array} \right ] = \left [ \begin{array}{ccccc}
		\frac{\delta_1\upchi_1}{\delta_1\upchi_1+\delta_2\upchi_2+\delta_3\upchi_3} \\[0.1in]
        \frac{\delta_2\upchi_2}{\delta_1\upchi_1+\delta_2\upchi_2+\delta_3\upchi_3}\\[0.1in]
        \frac{\delta_3\upchi_3}{\delta_1\upchi_1+\delta_2\upchi_2+\delta_3\upchi_3}\\[0.1in]
		\end{array} \right ].
\]

Having made the points $S_4$-centric, each $\upchi_i$ $\leq 0$, from which it follows that each $\lambda_i$ is positive. Therefore the pivot points corresponding to $S_4$ lie within $\triangle S_1S_2S_3$.  
Translating the points in $S$ so that $S_1$ is at the origin, we likewise find that $S_1$'s pivot points have strictly positive barycentric coordinates, i.e., that the pivot points corresponding to $S_1$ lie within $\triangle S_2S_3S_4$. \end{proof}

\begin{proposition}
The pivots points corresponding to $S_2$ lie in the unbounded regions $+--$ and $-++$ of $\triangle S_1S_3S_4$ while those of $S_3$ lie in the unbounded regions $++-$ and $--+$ of $\triangle S_1S_2S_4$.
\end{proposition}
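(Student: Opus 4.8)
The plan is to run the barycentric computation from the preceding proposition once more, the only difference being that the distinguished point is now an \emph{interior} point of the $x$-ordering, so the three remaining points straddle it. To locate the pivots of $S_2$, translate $S$ so that $S_2$ is at the origin; in these $S_2$-centric coordinates $\upchi_2=0$, while $x_1\le x_2\le x_3\le x_4$ forces $\upchi_1\le 0$ and $\upchi_3,\upchi_4\ge 0$. Since the $j=2$ summands carry the factor $\upchi_2=0$, they drop out of $\uprho_2$, leaving
\[\tilde{\upchi}_2=\frac{\delta_1\upchi_1^2+\delta_3\upchi_3^2+\delta_4\upchi_4^2}{\delta_1\upchi_1+\delta_3\upchi_3+\delta_4\upchi_4},\qquad \tilde{\upgamma}_2=\frac{\delta_1\upchi_1\upgamma_1+\delta_3\upchi_3\upgamma_3+\delta_4\upchi_4\upgamma_4}{\delta_1\upchi_1+\delta_3\upchi_3+\delta_4\upchi_4}.\]

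Next, express $\uprho_2$ in barycentric coordinates with respect to $\triangle S_1S_3S_4$ and solve the same $3\times 3$ system as in the preceding proof. Writing $D=\delta_1\upchi_1+\delta_3\upchi_3+\delta_4\upchi_4$ for the common denominator above, one checks directly — by multiplying the vertex matrix of $\triangle S_1S_3S_4$ by the column $(\delta_1\upchi_1,\delta_3\upchi_3,\delta_4\upchi_4)^{T}/D$ and comparing entries with $(\tilde{\upchi}_2,\tilde{\upgamma}_2,1)^{T}$ — that $\lambda_1=\delta_1\upchi_1/D$, $\lambda_3=\delta_3\upchi_3/D$, $\lambda_4=\delta_4\upchi_4/D$. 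Because every $\delta_j>0$, each $\lambda_j$ agrees in sign with $\upchi_j$ when $D>0$ and is opposite in sign to $\upchi_j$ when $D<0$. Hence if $D>0$ the triple $(\lambda_1,\lambda_3,\lambda_4)$ has signs $(-,+,+)$, so $\uprho_2$ lies in the region $-++$ of $\triangle S_1S_3S_4$; if $D<0$ the signs are $(+,-,-)$, the region $+--$; and if $D=0$ the pivot is the point at infinity, the degenerate case noted earlier. These exhaust the possibilities, which is the assertion for $S_2$.

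The case of $S_3$ goes by the same token. Passing to $S_3$-centric coordinates makes $\upchi_3=0$, with $\upchi_1,\upchi_2\le 0\le\upchi_4$; the $j=3$ terms vanish; expressing $\uprho_3$ barycentrically with respect to $\triangle S_1S_2S_4$ yields $\lambda_i=\delta_i\upchi_i/D'$ with $D'=\delta_1\upchi_1+\delta_2\upchi_2+\delta_4\upchi_4$; and the sign of $D'$ forces $(\lambda_1,\lambda_2,\lambda_4)$ to have signs $(-,-,+)$ when $D'>0$ and $(+,+,-)$ when $D'<0$, i.e. the regions $--+$ and $++-$ of $\triangle S_1S_2S_4$. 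Both of the listed regions really occur in each case: for $S_2$, repeating $S_1$ sufficiently often makes $D<0$ while repeating $S_4$ sufficiently often makes $D>0$, and similarly for $S_3$.

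I expect no real obstacle here beyond bookkeeping; the only places needing care are the degenerate configurations. One should assume the three non-distinguished points are not collinear, so the relevant barycentric frame is well-defined, and if any of the inequalities $x_1\le x_2\le x_3\le x_4$ is in fact an equality then the corresponding $\upchi_j$ vanishes and $\uprho_i$ lands on the boundary of the stated region rather than its interior; so the cleanest statement is under the strict ordering $x_1<x_2<x_3<x_4$, with closed regions absorbing the borderline cases. The one substantive difference from the preceding proposition is that an interior point has points on both sides of it, so the $\upchi_j$ no longer share a sign and two sign patterns appear in place of one.
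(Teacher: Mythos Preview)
Your argument is correct and follows the same route as the paper: pass to $S_i$-centric coordinates, read off the barycentric weights $\lambda_j=\delta_j\upchi_j/D$ with respect to the triangle on the other three vertices, and determine the region from the signs of the $\upchi_j$. Your case split on the sign of $D$ is in fact a bit crisper than the paper's phrasing, and your remarks on degeneracies and on both regions actually being attained go slightly beyond what the paper records, but the method is the same.
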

\begin{proof}
We know that $\uprho_2 = (\lambda_1, \lambda_3, \lambda_4)$ 
where $\lambda_i = \frac{\delta_i\upchi_i}{\delta_1\upchi_1+\delta_3\upchi_3+\delta_4\upchi_4}$. Since $\upchi_1 \leq \\[0.05in] 0 \leq \upchi_3, \upchi_4$ in $S_2$-centric coordinates, it follows that $\lambda_3$ and $\lambda_4$ must have the same sign.  Since the $\lambda_i$ sum to 1, $\lambda_1$ must have the opposite sign. Similarly, $\uprho_3 = (\lambda_1, \lambda_2, \lambda_4)$ where $\lambda_i = \frac{\delta_i\upchi_i}{\delta_1\upchi_1+\delta_2\upchi_2+\delta_4\upchi_4}$. Given  $S_3$-centric coordinates,\\[0.05in] $\upchi_1, \upchi_2 \leq 0 \leq \upchi_4$, from which it follows that $\lambda_1$ and $\lambda_2$ share a sign, opposite to that of $\lambda_4$.
\end{proof}

\begin{corollary}With four points, a pivot points cannot lie in the $+-+$ or $-+-$ regions of any triangle whose vertices are a subset of $S$.
\end{corollary}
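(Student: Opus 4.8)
The plan is to read the corollary off from the two propositions just established, which between them pin down, for each choice of repeated point, exactly which region of which triangle the associated pivot point occupies. I take ``any triangle whose vertices are a subset of $S$'' to mean the triangle on the three \emph{non-repeated} points of $S$, since that is the configuration the two propositions describe and it is the reading under which the statement is true: a pivot point \emph{can} land in the $+-+$ or $-+-$ region of a triangle that has its own base point as a vertex. (For example, with $S_1,S_2,S_3,S_4=(0,0),(1,1),(3,0),(4,-1)$ one gets $\uprho_1=(\tfrac{13}{4},-\tfrac{3}{8})$, whose barycentric coordinates relative to $\triangle S_1S_2S_3$ have signs $+\,-\,+$.)

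First I would split into the four cases according to which point is repeated. When the repeated point is outermost, the preceding proposition on outermost points places $\uprho_1$ strictly inside $\triangle S_2S_3S_4$ and $\uprho_4$ strictly inside $\triangle S_1S_2S_3$, i.e.\ in the $+++$ regions. When $S_2$ is repeated, the preceding proposition on $S_2$ and $S_3$ confines $\uprho_2$ to the $+--$ and $-++$ regions of $\triangle S_1S_3S_4$, and when $S_3$ is repeated it confines $\uprho_3$ to the $++-$ and $--+$ regions of $\triangle S_1S_2S_4$. The complete list of sign patterns that occurs is therefore $+++$, $+--$, $-++$, $++-$, $--+$; since neither $+-+$ nor $-+-$ is on the list, the corollary follows.

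To keep the underlying reason visible rather than merely tabulating, I would also note the following. In $S_i$-centric coordinates the barycentric weights of $\uprho_i$ with respect to $\triangle(S\setminus\{S_i\})$ are $\lambda_v=\frac{\delta_v\upchi_v}{\sum_w\delta_w\upchi_w}$, so $\operatorname{sign}\lambda_v$ equals $\operatorname{sign}(x_v-x_i)$ up to one global sign contributed by the shared denominator. Being in $+-+$ or $-+-$ would force the $x$-median of the three vertices $S\setminus\{S_i\}$ to be the unique one of them on a given side of $x_i$; but deleting one entry from the sorted list $x_1\le x_2\le x_3\le x_4$ never isolates the median of the three survivors on a single side of the deleted value, so that sign pattern cannot arise. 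The one thing needing care is this last bit of bookkeeping, since the labels $+-+$ and $-+-$ are indexed by the vertices' original subscripts and one must correctly match ``which $\lambda$ carries the odd sign'' with ``which vertex is the $x$-median''. Beyond that there is no real obstacle; the statement is a corollary in the strict sense.
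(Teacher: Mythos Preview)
Your argument is correct and is essentially the paper's own: both simply read off from Propositions 2 and 3 the five sign patterns $+++$, $+--$, $-++$, $++-$, $--+$ that can occur and conclude by exhaustion that $+-+$ and $-+-$ cannot. Your additional paragraph giving a direct reason via the formula $\lambda_v=\delta_v\upchi_v/\sum_w\delta_w\upchi_w$ is a pleasant extra, and your observation that the statement only holds for the triangle on the three \emph{non-repeated} points---together with the concrete counterexample where $\uprho_1$ lands in the $+-+$ region of $\triangle S_1S_2S_3$---is a genuine clarification that the paper's phrasing ``any triangle whose vertices are a subset of $S$'' glosses over.
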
 
\begin{proof}
This result follows directly from propositions 2 and 3, which jointly prove that five of the seven regions shown in Figure \ref{fig:barysigns} are candidates for pivot points. The $+-+$ and $-+-$ regions are by exhaustion ineligible.
\end{proof}
Figure 3 provides a visual example of the four-point case, though only a small portion of the unbounded regions are shown.
\begin{figure}[h!]
\centering
  \includegraphics[width=4.8in]{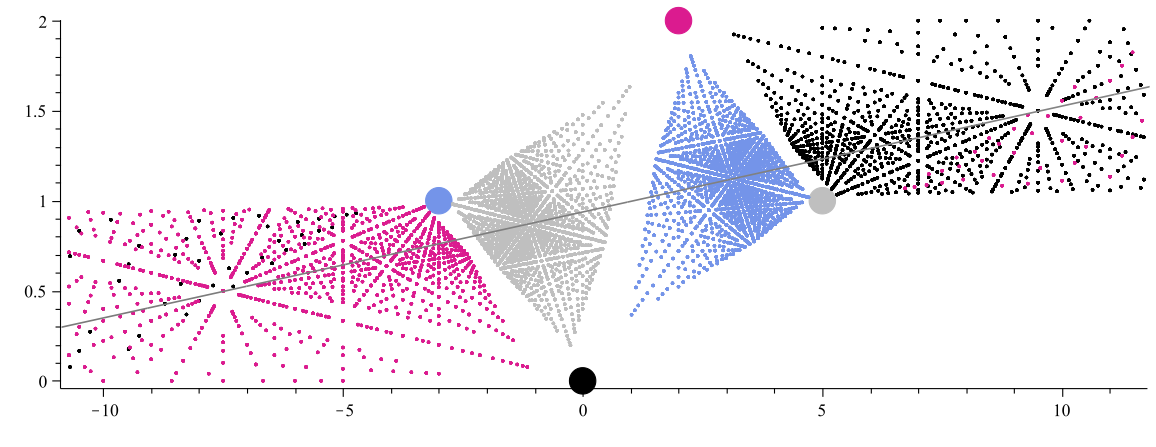}
  \caption{Pivot points for every combination of up to twelve repetitions of each point in a set of four, shown with the original regression line.}
  \label{fig:rbary}
\end{figure}


\subsection{The three-point case}
We can use barycentric coordinates to understand the pivoting behavior of a set of three, as given by \eqref{ABC}. Having lost a degree of freedom, the pivot points corresponding to any $S_i$ will lie on the line through the other two points; anywhere on that line, a pivot point is a linear combination of those two vertices. (The non-existent 3rd vertex always has a barycentric coordinate of 0.) The pivot points corresponding to an outermost point on the $x$-axis are bounded by the line segment between the other two points, while those corresponding to the inner point can lie only on the unbounded parts of that line.  An example of this is provided in Figure \ref{fig:r3}.
\begin{figure}[h!]
\centering
  \includegraphics[width=3.7in]{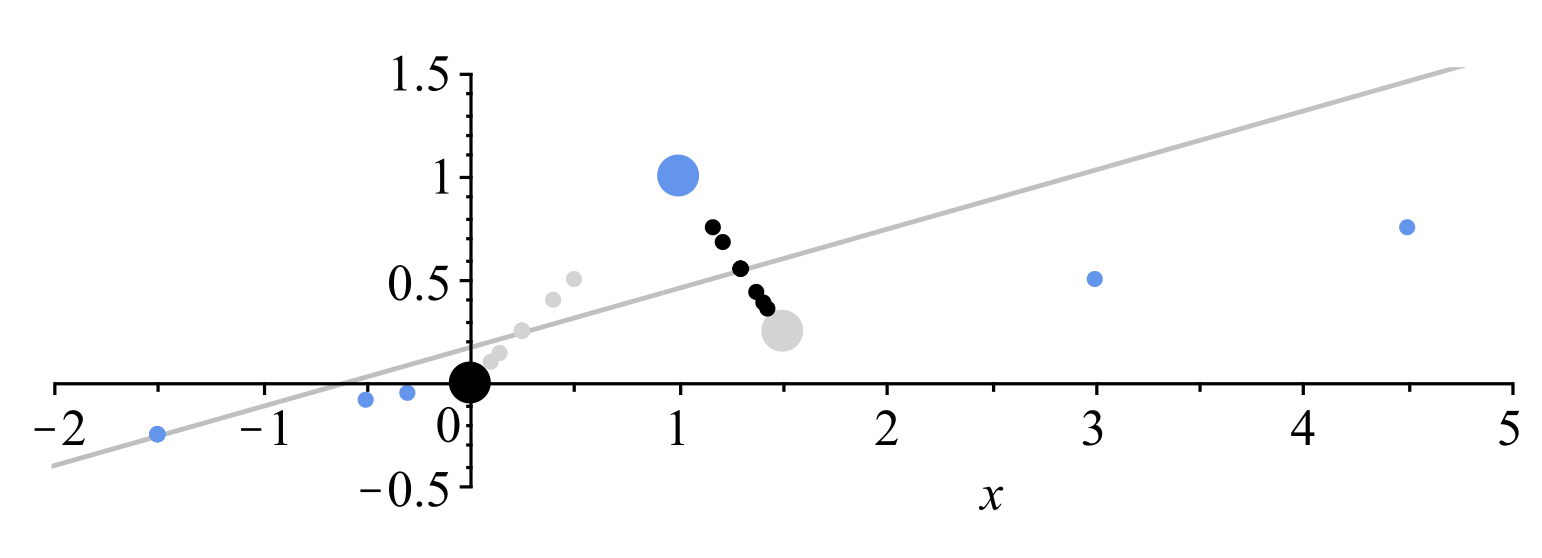}
  \caption{
Three points and their original regression line; an outermost point on the $x$-axis has pivots that are bounded by the line segment between the other two points; pivots corresponding to the inner point have the inverse property of occurring on either side of that line segment---but not within it.
}
  \label{fig:r3}
\end{figure}
\subsection{Generalizing to $n$ points}


Mean value coordinates are a generalization of barycentric coordinates to convex polytopes with more than three vertices~\cite{Floater}, for which the topology of triangulation is not unique~\cite{Ghosh}.
Within the interior of a polygon, mean value coordinates will be strictly positive, as with barycentric coordinates and triangle interiors. Since extreme points on the $x$-axis have pivot points that are a positive weighted average of the non-corresponding points in $S$, with $n$ points the pivot points corresponding to an outermost point on the $x$-axis are bounded by the convex hull of the other $n-1$ points.

Furthermore, with $n$ points, a pivot point will travel linearly toward a repeated point along the path that does not cross the $x$-coordinate of the repeated point’s pivot point. This phenomenon, which maps out the regions where pivot points can lie under all possible combinations of repetitions, can be observed in Figure 1.

\section{Pseudopivoting}
What happens if every point in $S$ is collinear? Naturally, we will lose the \textit{appearance} of pivoting; no matter how many times we repeat any combination of points, the regression line will always be the line through every point. Nonetheless, the formulas given in Definition 2 will still yield coordinates---pseudopivots, we might say.  Observe that while $\tilde{x}_i$ depends only on $x$-coordinates,  $\tilde{y}_i$ depends on both. Let us disregard the $y$ coordinates for a moment, and imagine that our coordinates lie on a horizontal line.  We know that if every $y_i$ is equal, then $\tilde{y} = y_i$ and there is nothing more to say about it:

$$\tilde{y}=y_1+\frac{(y_1-y_1)[{(x_2-x_1)+\ldots+(x_n-x_1)}]}{{(x_2-x_1)+\ldots+(x_n-x_1)}} = y_1.$$
\\The $x$-coordinates, however, will not cease to be interesting.  Indeed, they are candidates for iteration: If we consider only the $x$'s, then what are the pseudopivot coordinates of the pseudopivot coordinates? Let us consider a simple case. A set $\left\lbrace{a,b,c}\right\rbrace$ of real numbers has pseudopivots $\left\lbrace{{a}_1,{b}_1,{c}_1}\right\rbrace$, which we derive from the formula for $\tilde{x}_i$.
$${a}_1= a + \frac{(b-a)^2+(c-a)^2}{(b-a)+(c-a)} = 
\frac{b^2+c^2-a(b+c)}{b+c-2a}$$
$${b}_1= b + \frac{(a-b)^2+(c-b)^2}{(a-b)+(c-b)} = 
\frac{a^2+c^2-b(a+c)}{a+c-2b}$$
$${c}_1= c + \frac{(a-c)^2+(b-c)^2}{(a-c)+(b-c)} = 
\frac{a^2+b^2-c(a+b)}{a+b-2c}$$
In general, the respective formulas for the $nth$ psuedopivot of $a$, $b$, $c$ are:
$${a}_n=\frac{{b}_{n-1}^2+{c}_{n-1}^2-{a}_{n-1}({b}_{n-1}+{c}_{n-1})}{{b}_{n-1}+{c}_{n-1}-2{a}_{n-1}}$$
$${b}_n=\frac{{a}_{n-1}^2+{c}_{n-1}^2-{b}_{n-1}({a}_{n-1}+{c}_{n-1})}{{a}_{n-1}+{c}_{n-1}-2{b}_{n-1}}$$
$${c}_n=\frac{{a}_{n-1}^2+{b}_{n-1}^2-{c}_{n-1}({a}_{n-1}+b_{n-1})}{{a}_{n-1}+{b}_{n-1}-2{c}_{n-1}}$$

Figure \ref{fig: X's up to 6} provides a visual example, with $n$ on the vertical axis. \begin{figure}[h!]
\centering
  \includegraphics[width=4.5in]{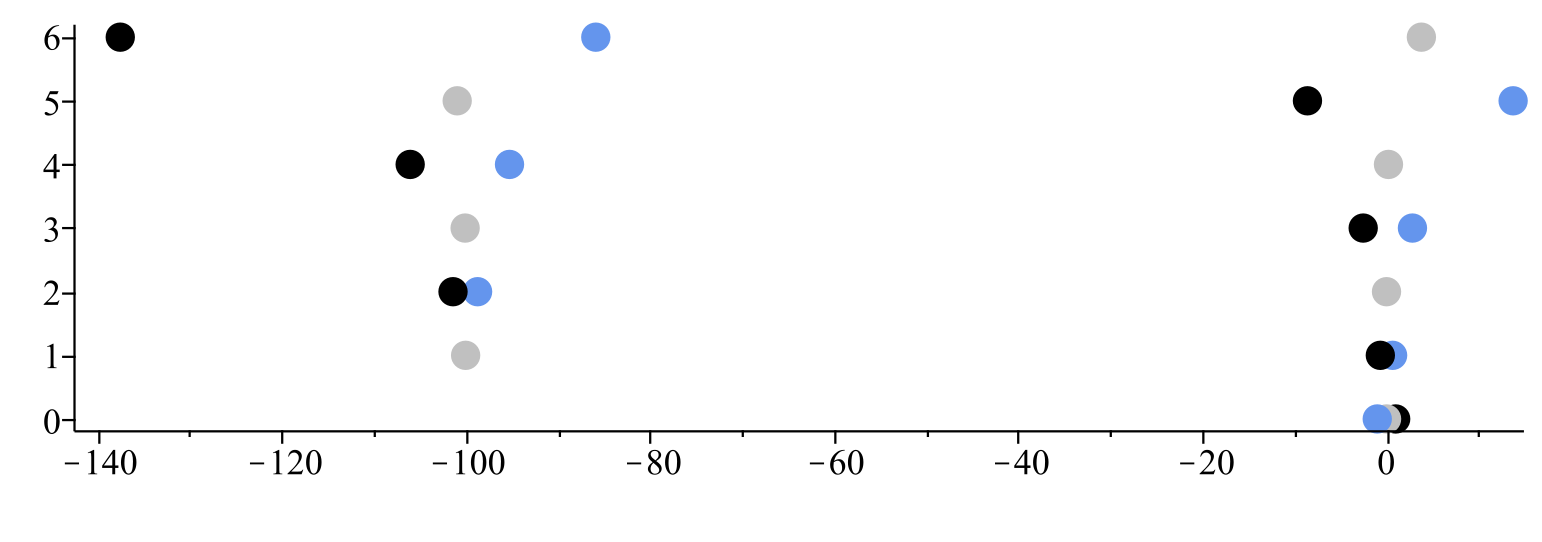}
  \caption{Six iterations of psuedopivots for the set $\left\lbrace{-1,.01,1}\right\rbrace$.}
  \label{fig: X's up to 6}
\end{figure} 

A distinct pattern can be observed. Moving from one iteration to the next, the outermost points swap positions while the inner point moves to its opposite extreme. As mentioned, if a number in the set is the average of the others, then it pivots at $\infty$. These averages can be thought of as bifurcation thresholds---if the inner coordinate is greater than the average, its psuedopivot coordinate will be the minimum of the successive set, while the pseudopivot of a below-average inner coordinate will be the successive max.

Consider the black point, for instance.  While it is the original maximum, it is not the max for any set up to $n=6$.  As can be observed in Fig. \ref{fig: X's 5 to 8}, it finally becomes a max when $n=8$.  We can see why upon inspection of the $n=7$ set, in which the black point is (for the first time) an inner coordinate that is less than the average of the outer two. 

\begin{figure}[h!]
\centering
  \includegraphics[width=4.5in]{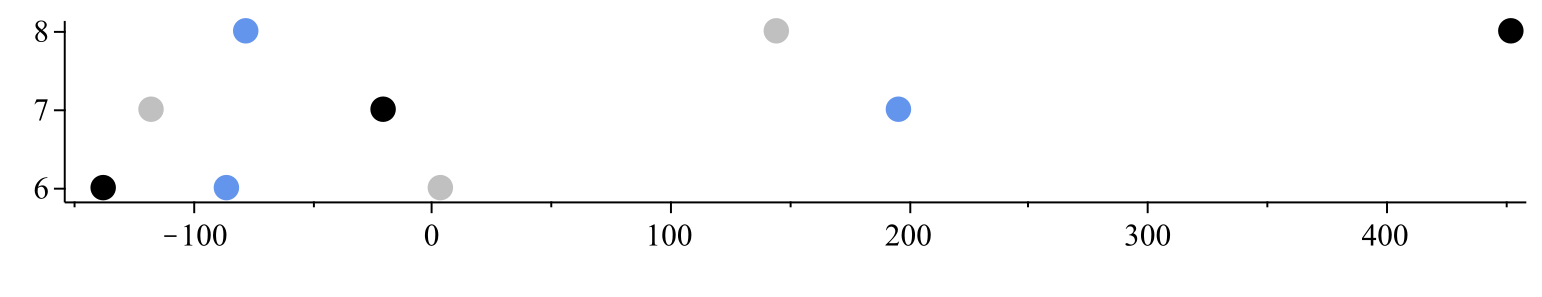}
  \caption{Additional pseudopivot coordinates for $\left\lbrace{-1,.01,1}\right\rbrace$, wherein the black point crosses the bifurcation threshold when $n=7$.}
  \label{fig: X's 5 to 8}
\end{figure}

What happens to $\left\lbrace{a_n,b_n,c_n}\right\rbrace$ as $n\to\infty$?

\begin{conj}
The range of pseudopivot points will increase with each successive iteration ad infinitum.
\end{conj}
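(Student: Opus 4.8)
The plan is to exploit the fact that the pseudopivot map $\{a,b,c\}\mapsto\{a_1,b_1,c_1\}$ commutes with every affine change of coordinates $x\mapsto\alpha x+\beta$ with $\alpha>0$; this is immediate from Definition~2, since $\tilde x_i$ is built only from the differences $x_j-x_i$ together with an overall translation. Ordering the three points and applying the normalization $T$ of Section~2, which sends the two outermost of them to $-1$ and $1$, every configuration is affinely equivalent to one of the form $\{-1,t,1\}$ with $t\in(-1,1)$. Consequently, if $R_n$ denotes the range of the $n$-th iterate, the ratio $R_{n+1}/R_n$ depends only on the shape of the $n$-th configuration --- that is, only on the parameter $t_n$ --- and equals one half the range of the image of $\{-1,t_n,1\}$. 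So it suffices to track the single sequence $t_n\in(-1,1)$ and to show that this ``range multiplier'' always exceeds $1$.

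First I would compute the image of $\{-1,t,1\}$. A short calculation gives the pseudopivots of $-1$, $t$, $1$ as
\[
a_1=\frac{t^2+t+2}{t+3},\qquad b_1=-\frac1t,\qquad c_1=\frac{t^2-t+2}{t-3},
\]
where $b_1=-1/t$ is exactly the weighted negative inversion identified in Section~2. From $a_1-1=\frac{(t-1)(t+1)}{t+3}$ and $c_1+1=\frac{(t-1)(t+1)}{t-3}$ one sees that $a_1\in(0,1)$ and $c_1\in(-1,0)$ on $(-1,1)$, while $|b_1|=1/|t|>1$; hence the two outermost points of the image are $b_1$ and (for $t>0$) $a_1$, and the new range divided by the old range $2$ is
\[
\rho(t):=\tfrac12(a_1-b_1)=\tfrac12\!\left(\frac{t^2+t+2}{t+3}+\frac1t\right),
\]
which is even in $t$ by the reflection $t\mapsto-t$. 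The crux of the monotonicity statement is then the identity
\[
\rho(t)-1=\frac{(t-1)(t^2-3)}{2\,t(t+3)},
\]
whose right-hand side is strictly positive for $t\in(0,1)$: both factors in the numerator are negative and the denominator is positive. Hence $\rho(t_n)>1$ at every step with $t_n\in(-1,1)\setminus\{0\}$, so $R_{n+1}>R_n$. It remains to see that this hypothesis persists: one checks that $a_1,b_1,c_1$ are pairwise distinct on $(-1,1)\setminus\{0\}$ (each coincidence reduces to a polynomial with no root there), so the image consists of three distinct points whose renormalized inner coordinate again lies strictly in $(-1,1)$; and the excluded value $t_n=0$ forces $b_{n+1}=\infty$, i.e.\ the range has already become infinite. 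This already establishes that the range strictly increases at every iteration.

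For the stronger reading of ``ad infinitum'' --- that $R_n\to\infty$ --- I would study the induced one-dimensional map $\tau\colon(-1,1)\to(-1,1)$, $t_n\mapsto t_{n+1}$, obtained by renormalizing $\{a_1,b_1,c_1\}$. A Taylor expansion at the endpoints shows $\tau$ is repelling there: $\tau(1-\varepsilon)=-1+2\varepsilon+O(\varepsilon^2)$, and symmetrically at $-1$, so the ``gap'' $\varepsilon_n:=1-|t_n|$ at least doubles (to leading order) whenever it is small. Therefore $\varepsilon_n$ cannot converge to $0$: if $\varepsilon_n<\varepsilon_0$ for all large $n$, with $\varepsilon_0$ chosen small enough that $\varepsilon_n<\varepsilon_0$ forces $\varepsilon_{n+1}\ge\tfrac32\varepsilon_n$, the gaps would grow geometrically and eventually exceed $\varepsilon_0$, a contradiction; hence $\varepsilon_n\ge\varepsilon_0$ for infinitely many $n$. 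On the set $\{\,|t|\le 1-\varepsilon_0\,\}$ the multiplier $\rho$ is bounded below by some $1+\delta_0>1$ (using $\rho>1$ on $(-1,1)\setminus\{0\}$ together with $\rho\to\infty$ near $0$), so infinitely many of the factors in $R_n=R_0\prod_{j<n}\rho(t_j)$ are at least $1+\delta_0$, forcing $R_n\to\infty$.

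The genuine obstacle is this last step: converting the purely local repulsion at $t=\pm1$ into the global statement that the orbit of $\tau$ returns infinitely often to the region where the multiplier is uniformly above $1$ (and, if one wants an explicit growth rate, controlling the oscillating near-boundary orbits more carefully). Everything preceding it is a finite check of rational identities. Should the global dynamics prove stubborn, the factorization of $\rho(t)-1$ still delivers, unconditionally, the weaker --- and perhaps intended --- assertion that the range increases at each successive iteration.
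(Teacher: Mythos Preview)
The paper offers no proof of this statement: it is explicitly labeled a \emph{Conjecture} and immediately followed by ``We leave this question open-ended.'' So there is nothing in the paper to compare against; the only question is whether your argument is correct.

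For the natural reading of the conjecture (strict increase of the range at every step), your proof is complete. The affine equivariance of the pseudopivot map is genuine and reduces everything to the one-parameter family $\{-1,t,1\}$; your formulas for $a_1,b_1,c_1$ are correct, as is the key factorization
\[
\rho(t)-1=\frac{(t-1)(t^2-3)}{2t(t+3)},
\]
which is positive on $(0,1)$ for exactly the sign reasons you give. The distinctness checks ($a_1=b_1\Leftrightarrow t=-1$, $b_1=c_1\Leftrightarrow t=1$, and $a_1=c_1$ has no real solution) are also right, so the iteration remains well defined whenever the inner point is not the exact average of the outer two --- the degenerate case you already flag.

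Your hedging in the final paragraph is unwarranted: the argument you sketch for $R_n\to\infty$ is actually complete. The expansion $\tau(1-\varepsilon)=-1+2\varepsilon+O(\varepsilon^2)$ checks out, and the oddness $\tau(-t)=-\tau(t)$ gives $\varepsilon_{n+1}=2\varepsilon_n+O(\varepsilon_n^2)$, which really does force $\varepsilon_n\ge\varepsilon_0$ infinitely often via the geometric-growth contradiction. On $(0,1-\varepsilon_0]$ one has the explicit bound $\rho(t)-1=\frac{(1-t)(3-t^2)}{2t(t+3)}\ge\frac{2\varepsilon_0}{8}=\varepsilon_0/4$, so infinitely many factors in $R_n=R_0\prod_{j<n}\rho(t_j)$ exceed $1+\varepsilon_0/4$ while all exceed $1$, and the product diverges. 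You have resolved the conjecture, not merely proposed a strategy.
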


 Moreover, we know that the same pattern of permutations will not occur twice in a row. Subject to this, are there sequences of points that have definable patterns?  We leave this question open-ended.


\section{Further questions}
The discussion presented herein leaves a great many questions open for whoever is curious to know. We include some below and invite the reader to add their own.  

\begin{itemize} 
\item In order to plot a continuous rather than discrete repetition variable, can we derive a parameterization of the region where a set of pivot points can lie?   
\item Does every possible sequence of pseudopivot permutations correspond to a set of points?
\item What does pseudopivoting look like in higher dimensions?
\end{itemize}

\section*{Acknowledgements} The author wishes to thank Dr.~Edward Early for his mentorship throughout this project; Dr.~Mitch Phillipson, who also helped; and Noelle Mandell, without whom none of this would have happened.



\begin{thebibliography}{9}
\bibitem{Floater} M. S. Floater, \textit{Generalized barycentric coordinates and applications,} Acta Numerica, \textbf{24} (2015), 161-214. 

\bibitem{Ghosh} S. K. Ghosh,\textit{ Visibility Algorithms in the Plane,} Cambridge University Press, 2007.

  \bibitem{Lutzer} C. Lutzer, \textit{A Curious Feature of Regression,} College Math. J. \textbf{48} (2017), 189--198.
  
    \bibitem{Nikkhoo} M. Nikkhoo, and T. R. Walter, \textit{Triangular dislocation: An analytical, artefact-free solution,} Geophys. J. Int., \textbf{201} (2015), 1117–1139

\bibitem{Ungar} A. A. Ungar, \textit{Barycentric Calculus in Euclidean and Hyperbolic Geometry: A Comparative Introduction,} World Scientific Publishing Co. Pte. Ltd., Hackensack, NJ, 2010. 



\end{thebibliography}
\end{document}